\documentclass[11pt]{article} 

\usepackage[utf8]{inputenc} 

\usepackage{geometry} 
\geometry{a4paper} 

\usepackage{graphicx} 


\usepackage{booktabs} 
\usepackage{array} 
\usepackage{paralist} 
\usepackage{verbatim} 
\usepackage{subfig} 

\usepackage{fancyhdr} 
\pagestyle{fancy} 
\lhead{}\chead{}\rhead{}
\lfoot{}\cfoot{\thepage}\rfoot{}


\usepackage[nottoc,notlof,notlot]{tocbibind} 
\usepackage[titles,subfigure]{tocloft} 


\usepackage{amsmath,amssymb,amsthm} 

\theoremstyle{plain} 
\newtheorem{lemma}[equation]{Lemma} 
\newtheorem{theorem}[equation]{Theorem} 
\newtheorem{remark}[equation]{Remark} 
\newtheorem{conjecture}[equation]{Conjecture} 

\theoremstyle{definition}
\newtheorem{definition}[equation]{Definition}

\numberwithin{equation}{section} 


\title{\bf LOWER BOUNDS FOR $L_1$ DISCREPANCY}
\author{Armen Vagharshakyan}
\date{} 

\begin{document}
\maketitle
\begin{abstract}
We find the best asymptotic lower bounds for the coefficient of the leading term of the $L_1$ norm of the two-dimensional (axis-parallel) discrepancy  that can be obtained by K.Roth's orthogonal function method among a large class of test functions.
We use methods of combinatorics, probability, complex and harmonic analysis.
\end{abstract}
\maketitle
\section{Introduction}\indent\indent
Let $P$ be a finite set in the two-dimensional unit square ($P\subset [0,1]^2$). 
Define the two-dimensional (axis-parallel) discrepancy function as follows:
\begin{equation*}
D(x,y)=D_P(x,y)=\sharp{(P\cap ([0,x]\times[0,y]))}-\sharp(P)\cdot xy,\quad (x,y)\in [0,1]^2,
\end{equation*}
where $\sharp(Q)$ denotes the number of elements (cardinality) of a set $Q$.
 The discrepancy function measures the difference between the actual number of points of the set $P$ in an axis-parallel rectangle $[0,x]\times [0,y]$ and the ``expected" number of points in that rectangle. Thus, the discrepancy function quantifies the ``closeness" of the distribution generated by a finite set $P$ to the uniform distribution.

The $L_\infty$ norm of the discrepancy function (also known as the star-discrepancy) -
$
||D_P||_{L_{\infty}([0,1]^2)}
$
 has been studied for a while and is known to be related to numerical integration, metric entropy, the small ball inequality etc.
We refer the reader to \cite{B},\cite{C} or \cite{JM} for a comprehensive discussion of the star-discrepancy.

In this paper we will be dealing with lower bounds  for the $L_1$ norm of the discrepancy function. Quoting a recent essay on the discrepancy theory (\cite{B}, 2013):
``The other endpoint of the $L^p$ scale, $p=1$, is not any less (and perhaps even more) difficult 
than the star-discrepancy estimates. The only information that is available
is the two-dimensional inequality (proved in the paper of Halasz \cite{H},1981))".

	When discussing the $L_1$ norm of the discrepancy function, it is convenient to define:
\begin{equation}\label{theconstant}
d_N=\frac{1}{\sqrt{\ln{N}}}\cdot\underset{\sharp{(P)}=N}{\inf}\int_0^1 \int_0^1 |D_P(x,y)| \; dx dy.
\end{equation}
 It can be deduced from H. Davenport's argument (\cite{D},1956) that:
\begin{equation}\label{upper}
\limsup_{N\to \infty} d_N<+\infty.
\end{equation}
On the other hand (see remark \ref{halasztest}) G. Halasz (\cite{H},1981) proved the following inequality:
\begin{equation}\label{lower}
\liminf_{N\to \infty} d_N\geq c_H>0.
\end{equation}
     A careful analysis of his proof shows that:
\begin{equation*}
c_H=\frac{1}{1152(\sqrt{e}+1)\sqrt{\ln 2}}\approx 0.00039.
\end{equation*}
The inequalities \eqref{upper} and \eqref{lower} show that $d_N$ is the coefficient of the leading term of the $L_1$ norm of the two-dimensional (axis-parallel) discrepancy function.

In this paper we improve the asymptotic lower bounds for $d_N$, namely:
\begin{theorem}\label{introtheorem}
For the coefficient of the leading term of the $L_1$ norm of the two-dimensional discrepancy defined by \eqref{theconstant} the following asymptotic lower bounds hold:
\begin{equation*}
\liminf_{N\to \infty}d_N\geq \frac{3}{256\sqrt{e\ln 2}}\approx 0.00854
\end{equation*}
and
\begin{equation*}
\limsup_{N\to \infty}d_N\geq \frac{1}{64\sqrt{e\ln 2}}\approx 0.01138.
\end{equation*}
\end{theorem}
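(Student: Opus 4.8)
The plan is to realize the lower bound through the dual formulation
\[
\|D_P\|_{L_1([0,1]^2)}\ \ge\ \frac{\langle D_P,\Psi\rangle}{\|\Psi\|_{L_\infty}},\qquad \text{valid for every bounded }\Psi,
\]
and to engineer an essentially optimal test function $\Psi$ as a complex (hyperbolic) Riesz product built from the Haar coefficients of $D_P$ at the critical scale. First I would fix the resolution $n$ to be the least integer with $2^n\ge 2N$, so that two cells lie below each point and $t:=N2^{-n}\in(\tfrac14,\tfrac12]$. For each $j\in\{0,\dots,n\}$ I set $f_j=\sum_R \varepsilon_R\,h_R$, the sum running over all dyadic rectangles $R=I\times J$ of the fixed shape $|I|=2^{-j}$, $|J|=2^{-(n-j)}$ (hence $|R|=2^{-n}$), where $h_R(x,y)=h_I(x)h_J(y)$ are the $L_\infty$-normalized Haar functions and $\varepsilon_R=\mathrm{sgn}\,\langle D_P,h_R\rangle$. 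Since each point of $[0,1]^2$ lies in exactly one rectangle of shape $j$, the function $f_j$ takes values $\pm1$.

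The engine is Roth's observation that oversized cells see almost none of the point mass: for a rectangle $R$ of area $2^{-n}$ containing no point of $P$ one has $\langle D_P,h_R\rangle=-N\langle xy,h_R\rangle=-N|R|^2/16$, of fixed sign and size $t\,2^{-n}/16$. At most $N$ of the $2^{n}$ rectangles of a given shape are non-empty, and every summand in $\langle D_P,f_j\rangle=\sum_R|\langle D_P,h_R\rangle|$ is non-negative, so keeping only the empty cells yields, uniformly in $j$,
\[
\langle D_P,f_j\rangle\ \ge\ 2^{n}(1-t)\cdot \frac{t\,2^{-n}}{16}\ =\ \frac{(1-t)t}{16}.
\]
Next I form $\Psi=\mathrm{Im}\prod_{j=0}^n(1+i\gamma f_j)$. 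Writing $\sigma_j=f_j\in\{\pm1\}$ pointwise and factoring $1+i\gamma\sigma_j=\sqrt{1+\gamma^2}\,e^{\,i\sigma_j\arctan\gamma}$ gives the exact formula $\Psi=(1+\gamma^2)^{(n+1)/2}\sin\!\big(\arctan\gamma\cdot\sum_j\sigma_j\big)$, so $\|\Psi\|_{L_\infty}\le(1+\gamma^2)^{(n+1)/2}$ and $\int\Psi=0$. On the Haar side only the first-order term of the product lives on cells of area $2^{-n}$, while every higher product $f_{j_1}\cdots f_{j_s}$ with $j_1<\dots<j_s$ collapses to $\pm h_{R^\ast}$ on a sub-resolution rectangle of area $2^{-(n+j_s-j_1)}<2^{-n}$; hence $\langle D_P,\Psi\rangle=\gamma\sum_{j=0}^n\langle D_P,f_j\rangle+E$, where $E$ gathers the odd-order sub-resolution contributions.

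Taking $\gamma^2=1/n$ makes $(1+\gamma^2)^{(n+1)/2}\to\sqrt e$, and with $n+1\sim\log_2 N=\ln N/\ln 2$ the main term gives $\langle D_P,\Psi\rangle/\|\Psi\|_{L_\infty}\ge \frac{(1-t)t}{16\sqrt{e}}\sqrt{\ln N/\ln 2}\,(1-o(1))$, that is $d_N\ge \frac{(1-t)t}{16\sqrt{e\ln 2}}(1-o(1))$. For the $\liminf$ the worst admissible value is $t=\tfrac14$, whence $(1-t)t=\tfrac{3}{16}$ and the constant $\tfrac{3}{256}$; along the subsequence $N=2^{k}$ one may take $t=\tfrac12$, whence $(1-t)t=\tfrac14$ and the constant $\tfrac1{64}$ for the $\limsup$. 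The hard part—where I expect most of the work to go—is showing $E=o(\sqrt{\log N})$: the error $E=-\gamma^{3}\sum_{j<k<l}\langle D_P,f_jf_kf_l\rangle+\cdots$ has $\binom{n}{s}$ terms at order $s$, which overwhelm the factor $\gamma^{s}$ unless one exploits genuine cancellation. Controlling it means counting the coincidences of dyadic rectangles that produce a common sub-resolution $R^\ast$ and invoking the (quasi-)independence of the signs $\varepsilon_R$—the combinatorial and probabilistic core of the argument—together with the smallness of the sub-resolution Haar coefficients of $D_P$. Finally, the assertion implicit in the abstract that $\tfrac{3}{256}$ and $\tfrac1{64}$ are the \emph{best} constants obtainable by Roth's method over the admissible class of test functions would require the matching converse: optimizing the profile $t\mapsto 1+i\gamma t$ against all competitors by a variational estimate, which I would treat separately.
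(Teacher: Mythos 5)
Your proposal follows essentially the same route as the paper's own proof: $\pm1$-valued auxiliary functions at the critical scale $2^{-n}$, the test function $\mathrm{Im}\prod_{j=0}^n(1+i\gamma f_j)$, which precisely because $|f_j|\equiv 1$ collapses to $(1+\gamma^2)^{(n+1)/2}\sin\bigl(\arctan\gamma\cdot\sum_j f_j\bigr)$, the choice $\gamma\sim n^{-1/2}$ producing the factor $1/\sqrt{e}$, and the case analysis $t=N2^{-n}\in(\tfrac14,\tfrac12]$ giving $3/256$ for the $\liminf$ and, along $N=2^k$, $1/64$ for the $\limsup$. (The paper's test function is literally $\sin\bigl((f_0+\cdots+f_n)/\sqrt{n}\bigr)$, i.e.\ yours with $\arctan\gamma=1/\sqrt{n}$, and your main-term estimate is its Lemma \ref{short}.) The genuine gap is the one you flag yourself: the error term $E$ is never estimated, and the mechanism you propose for it --- ``genuine cancellation'' and ``(quasi-)independence of the signs $\varepsilon_R$'' --- is the wrong one. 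The signs $\varepsilon_R=\mathrm{sgn}\langle D_P,h_R\rangle$ are deterministic functionals of the unknown set $P$; no independence is available, and none is needed. The paper's Lemma \ref{long} controls $E$ by putting absolute values on every term: for fixed $j_1<\cdots<j_s$ the product $f_{j_1}\cdots f_{j_s}$ is a signed sum of Haar functions over a \emph{disjoint} grid of rectangles $R^\ast$ of area exactly $2^{j_1-j_s-n}$, whence $|\langle D_P,f_{j_1}\cdots f_{j_s}\rangle|\le C\,2^{j_1-j_s}$; since the number of tuples with $j_s-j_1=d$ is at most $(n+1)\binom{d-1}{s-2}$, one gets $\sum_{j_1<\cdots<j_s\le n}2^{j_1-j_s}\le (n+1)\sum_{d}2^{-d}\binom{d-1}{s-2}$, so the geometric factor $2^{-d}$ beats the binomial count, and summing over odd $s\ge 3$ gives $E=O(n\gamma^3)=O(n^{-1/2})$, negligible against the main term of order $\sqrt{n}$. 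No cancellation between terms enters at any point.

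There is also a secondary issue created by your (otherwise natural) choice to build $f_j$ from \emph{all} $2^n$ cells of shape $j$ rather than only the empty ones: an intersection cell $R^\ast$ occurring in $f_{j_1}\cdots f_{j_s}$ may now contain points of $P$, and then $\langle D_P,h_{R^\ast}\rangle$ is not $-N|R^\ast|^2/16$; it acquires a counting contribution, bounded by $\sharp(P\cap R^\ast)\,|R^\ast|/4$ per cell. This is still harmless --- the cells $R^\ast$ of a fixed shape are disjoint, so $\sum_{R^\ast}\sharp(P\cap R^\ast)\le N$ and the extra contribution is at most $N2^{j_1-j_s-n}/4\le 2^{j_1-j_s}/8$ --- but it is a step your outline does not contain and must be carried out. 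The paper sidesteps it entirely: its $f_j$ uses only empty rectangles, recursively refined (each occupied cell is subdivided by the factor $2^{n+1}$ in each coordinate, and the procedure iterates) so that $|f_j|\equiv 1$ still holds while every intersection rectangle appearing in a product is automatically empty, making the coefficient computation exact. With these two repairs --- the absolute-value/geometric-decay estimate of $E$, and the counting-part bound on occupied cells --- your argument closes and yields exactly the paper's constants.
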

For comparison, the best-known lower estimates of the $L_2$ norm of the discrepancy were obtained in \cite{HM}. 
\begin{remark}\label{comaprisonofnorms}
In \cite{FPS} it was proved that:
\begin{equation*}
\liminf_{N\rightarrow\infty}\frac{1}{\sqrt{\ln{N}}} \cdot\inf_{\sharp(P)=N}||D_P||_{L_2([0,1]^2)}\leq 0.17905.
 \end{equation*}
Numerical experiments for the sets introduced in \cite{BTY} make the following estimate plausible:
\begin{equation*}
 \liminf_{N\rightarrow\infty}\frac{1}{\sqrt{\ln{N}}}\cdot\inf_{\sharp(P)=N}||D_P||_{L_2([0,1]^2)}\leq 0.17601.
\end{equation*}
\end{remark}
\begin{remark}
Comparison of remark \ref{comaprisonofnorms} with theorem \ref{introtheorem} shows that for sets with minimal $L_2$ discrepancy their $L_2$ discrepancy is close to the $L_1$ discrepancy.
\end{remark}
In this paper we also show that theorem \ref{introtheorem} provides the best bounds that can be obtained by K. Roth's orthogonal function method for a large class of test functions (see theorems \ref{asasum} and \ref{naturalclass}). 

Theorem \ref{introtheorem}, as well as the fact that the best known lower bounds for the $\limsup$ and $\liminf$ of the coefficient of the leading term of the $L_2$ norm of the two-dimensional discrepancy are also different (this fact can be derived from \cite{HM}), lead us to formulate the following conjecture:
\begin{conjecture} The following strict inequality is true:
\begin{equation*}
\limsup_{N\rightarrow\infty} d_N>\liminf_{N\rightarrow\infty} d_N
\end{equation*}
\end{conjecture}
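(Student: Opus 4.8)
Since the statement is a conjecture, I treat this as a proposed line of attack rather than a finished argument. The logical reduction is clean: Theorem~\ref{introtheorem} already supplies the lower bound $\limsup_N d_N \ge \tfrac{1}{64\sqrt{e\ln 2}}$, so to establish the strict inequality it suffices to produce an upper bound $\liminf_N d_N \le c'$ with $c' < \tfrac{1}{64\sqrt{e\ln 2}} \approx 0.01138$. Equivalently, I must exhibit an infinite sequence of cardinalities $N_j$ together with point sets $P_{N_j}$ of cardinality $N_j$ whose normalized $L_1$ discrepancy $\tfrac{1}{\sqrt{\ln N_j}}\int_0^1\int_0^1 |D_{P_{N_j}}|\,dx\,dy$ stays below that threshold. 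The entire difficulty is therefore on the upper-bound (construction) side, which is exactly the side where $L_1$ theory is weakest.

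The plan is to take $N = 2^n$ and work with an explicit family for which the discrepancy function has a transparent Haar/Walsh expansion --- a digit-shifted or symmetrized Hammersley net, or the near-$L_2$-optimal sets of \cite{BTY} underlying Remark~\ref{comaprisonofnorms}. For such a net, $D_P$ decomposes as a sum $\sum_{r} \Delta_r$ over dyadic levels $r = 0,1,\dots,n-1$, where each $\Delta_r$ is a signed combination of Haar functions at scale $2^{-r}$ with coefficients fixed by the digit scrambling. First I would write $\int |D_P|$ in terms of this expansion; then, exploiting the approximate independence of the level contributions $\Delta_r$, estimate the $L_1$ norm through its probabilistic (central-limit) surrogate. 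The heuristic target is $\int|D_P| \sim \sqrt{2/\pi}\,\|D_P\|_{L_2}$, the Gaussian ratio between the first and second moments of a sum of many nearly independent mean-zero pieces; combined with the $L_2$ data of Remark~\ref{comaprisonofnorms} and an optimization over the scrambling digits, this is the mechanism by which one hopes to drive $c'$ below the limsup constant.

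To make the oscillation genuine rather than a mere gap between incomparable bounds, I would then compare this construction against the subsequence singled out in the proof of Theorem~\ref{introtheorem}, tracing that proof to pin down exactly which arithmetic feature of $N$ --- I expect the fractional part $\{\log_2 N\}$, which governs how many dyadic scales are fully resolved --- produces the larger constant $\tfrac{1}{64\sqrt{e\ln 2}}$. The conceptually cleanest form of the argument is to prove that Roth's orthogonal function method is asymptotically \emph{sharp} for $L_1$: that for every $N$ there is a point set matching, up to $o(\sqrt{\ln N})$, the best lower bound the method yields for that $N$. Since Theorems~\ref{asasum} and~\ref{naturalclass} already identify the optimal Roth constant as an oscillating function of $N$, sharpness would immediately give $d_N \sim (\text{that oscillating constant})$ and hence $\limsup_N d_N > \liminf_N d_N$.

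The main obstacle is the $L_1$ evaluation of the constructed discrepancy function. Unlike the $L_2$ case, where Parseval turns $\|D_P\|_{L_2}^2$ into an exact sum of squared coefficients, the absolute value in the $L_1$ norm destroys orthogonality, and the Gaussian surrogate is only a heuristic: one must quantitatively control both the non-Gaussian corrections to $\int|D_P|$ and the boundary levels $r \approx n$, where the net begins to resolve individual cells and the central-limit approximation is least reliable. A secondary but serious difficulty is purely numerical --- the best available upper bounds on $\liminf_N d_N$ (via $\|D\|_{L_1}\le\|D\|_{L_2}$ and \cite{FPS}) sit near $0.179$, more than a factor of ten above the target $0.01138$, so a successful construction must improve the $L_1$ upper bound drastically and not merely marginally. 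This is precisely why the statement is posed as a conjecture: the lower-bound side is in hand, but a sufficiently sharp $L_1$ construction --- or a proof that Roth's method is tight --- remains to be found.
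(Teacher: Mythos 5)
The paper offers no proof of this statement: it is posed as a conjecture, motivated only by the gap between the two constants in Theorem \ref{introtheorem} and by the analogous gap in the best known $L_2$ lower bounds derived from \cite{HM}. So there is no argument in the paper to match yours against, and you are right to treat your text as a program rather than a proof; what follows assesses that program on its own terms.

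Your logical reduction is sound, and your diagnosis of the oscillation matches the paper's mechanism exactly: the per-$N$ constant the method produces is $\frac{x(1-x)}{16\sqrt{e\ln 2}}$ with $x=N2^{-n}\in(1/4,1/2]$ fixed by \eqref{whatnis}, maximal (value $\frac{1}{64\sqrt{e\ln 2}}$) along $2N=2^n$ and minimal (value $\frac{3}{256\sqrt{e\ln 2}}$) as $x\to 1/4$, so it is indeed $\lbrace \log_2 N\rbrace$ that separates the two constants. But each of your two routes has a concrete obstruction beyond mere difficulty. First, the construction route defeats itself: your engine for evaluating $\int|D_P|$ is the central-limit surrogate $\|D_P\|_1\approx\sqrt{2/\pi}\,\|D_P\|_2$, yet $\|D_P\|_2/\sqrt{\ln N}$ is bounded below by the known $L_2$ constants of Roth--Hal\'asz type (improved in \cite{HM}), so under the surrogate the achievable $c'$ is pinned at $\sqrt{2/\pi}$ times an $L_2$ quantity that would have to fall below $0.01426$ --- essentially at the conjectural floor, barely above the paper's own $L_1$ liminf bound $0.00854$ --- for you to beat $0.01138$. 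In other words, the Gaussian mechanism can reach your target only in the regime where the Gaussian approximation has already failed; you would need to engineer, and quantitatively control, a strongly non-Gaussian (heavy-tailed, small-median) distribution for $D_P$, and your proposal offers no tool for that. Second, the ``sharpness of Roth's method'' route overstates what Theorems \ref{asasum} and \ref{naturalclass} establish: they optimize only over the test functions $H_n$ for the fixed auxiliary family of Section \ref{orthogonality}, with $n$ rigidly coupled to $N$ through \eqref{whatnis}; they do not show that the method itself, allowed to vary the auxiliary construction or the scale normalization for non-dyadic $N$, outputs an oscillating constant. A priori the oscillation could be an artifact of forcing dyadic scales on non-dyadic $N$, so even granting sharpness you would first have to prove the oscillation survives optimization on the auxiliary side --- and sharpness itself ($d_N$ matching the method's bound up to $o(1)$) is a strictly stronger open statement than the conjecture, requiring exactly the $L_1$-optimal constructions whose absence you correctly identify as the bottleneck.
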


%
%
%
%
%
%
\section*{Acknowledgements}\indent\indent
I would like to thank Dmitriy Bilyk and Jill Pipher for their valuable comments.

%
%
%
%
%
%
\section{Auxiliary Functions}\label{orthogonality}\indent\indent
The existing proofs of lower bounds for discrepancy are based on certain construction of functions called Roth's auxiliary functions (see \cite{R}). 
In this section we will modify that construction so that now the auxiliary functions do not take the value zero, but  from all other perspectives they are as good as Roth's original auxiliary functions. 
The fact that they do not take the value zero will play a crucial role later on.

For a finite set $P\subset [0,1]^2$ denote by $N=\sharp(P)$ the number of elements of the set $P$. Let $n$ be the only integer satisfying the condition:
\begin{equation}\label{whatnis}
2^{n-1}<2\sharp(P)=2N\leq 2^n.
\end{equation}
For every $i=0,1,\dots ,n$ we will define 
 $E_i$ to be a family of dyadic rectangles in $[0,1]^2$:
\begin{equation*}
E_i=\bigcup_{l=0}^{+\infty}E_i^l,
\end{equation*}
where the sets $E_i^l$ will be defined recursively in the next paragraph, and we will define
 $C_i$ to be a family of dyadic rectangles in $[0,1]^2$:
\begin{equation*}
C_i=\bigcup_{l=0}^{+\infty}C_i^l,
\end{equation*}
where the sets $C_i^l$ will be defined recursively in the next paragraph.
\\\indent
\textbf{Recursive definition.}\\\indent
\textbf{Step $0$.}
In this step we define the families $E_i^0$ and $C_i^0$.
Consider the dyadic rectangles $R=R_x\times R_y\in [0,1]^2$ whose sidelengths satisfy
$|R_x|=2^{-i}$, $|R_y|=2^{i-n}$. Denote by $C_i^0$ the family of those sets $R$ that contain at least a point of the set $P$ and
by $E_{i}^0$ the family of those sets $R$ that do not contain any point of the set $P$.
We have:
\begin{equation*}
\sharp(C_{i}^0)+\sharp(E_{i}^0)=2^n
\end{equation*}
and
\begin{equation*}
\sharp(C_{i}^0)\leq N.
\end{equation*}\indent
\textbf{Step $1$.}
Partition each of the sets $R\in C_i^0$ into $2^{2(n+1)}$ dyadic rectangles which have the form
$R^1=R_x^1\times R_y^1$, where $|R_x^1|=|R_x|/2^{n+1}$, $|R_x^1|=|R_x|/2^{n+1}$. Define
$E_i^1$ to be the family of those sets $R^1$ which do not contain any point of the set $P$ and
define
$C_i^1$ to be the family of those sets $R^1$ which contain at least a point of the set $P$. 
We have:
\begin{equation*}
\sharp(C_i^1)\leq N.
\end{equation*}\indent
\textbf{Step $l+1$.}
In step $l+1$ partition each of the sets $R^{l}=R_x^l\times R_y^l\in C_i^{l}$ into $2^{2(n+1)}$ dyadic rectangles which have the form
$R^{l+1}=R_x^{l+1}\times R_y^{l+1}$, where 
\begin{equation*}
|R_x^{l+1}|=|R_x^l|/2^{n+1},\; |R_y^{l+1}|=|R_y^l|/2^{n+1}.
\end{equation*}
 Define
$E_i^{l+1}$ to be the family of those sets $R^{l+1}$ which do not contain any point of the set $P$ and
define
$C_i^{l+1}$ to be the family of those sets $R^1$ which contain at least a point of the set $P$. 
We have:
\begin{equation*}
\sharp(C_i^{l+1})\leq N.
\end{equation*}
\begin{definition}\label{haardefinition}
Let $R=R_x\times R_y$ be a dyadic rectangle in $[0,1]^2$. By $h_R$ we will denote the $L_{\infty}$ normalized Haar function adapted to the dyadic rectangle $R$, i.e.
\begin{equation*}
h_R(x,y)=h_{R_x}(x)\cdot h_{R_y}(y),
\end{equation*}
where the function $h_{R_x}$ equals $1$ on the left half of the dyadic interval $R_x$, $-1$ on the right half of the dyadic interval $R_x$ and $0$ outside of the dyadic interval $R_x$. Similarly, the function $h_{R_y}$ equals $1$ on the left half of the dyadic interval $R_y$, $-1$ on the right half of the dyadic interval $R_y$ and $0$ outside of the dyadic interval $R_y$.
\end{definition}
\begin{definition}\label{definitionauxiliary} Define the auxiliary functions $f_i$ by:
\begin{equation}
f_i=\sum_{R\in E_i} h_R,
\end{equation}
where $h_R$ is the $L_{\infty}$ normalized Haar function adapted to the rectangle $R$ (see definition \ref{haardefinition}). 
\end{definition}
\begin{remark}\label{originalauxiliary}
The following function:
\begin{equation*}
f_i^0=\sum_{R\in E_i^0} h_R
\end{equation*}
is the original Roth's auxiliary function.
\end{remark}
\begin{lemma}\label{setzerocalc}
The set  
\begin{equation*}
M=[0,1]^2\setminus \bigcup_{R\in E_i}R
\end{equation*}
has measure zero.
\end{lemma}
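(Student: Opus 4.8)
The plan is to identify $M$ with a decreasing intersection of the ``occupied'' regions and then show that the area of those regions decays geometrically with the refinement level.

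First I would record the telescoping structure built into the recursive definition. At level $0$ the families $C_i^0$ and $E_i^0$ together tile $[0,1]^2$, and at each subsequent step every rectangle of $C_i^{l}$ is cut into $2^{2(n+1)}$ congruent dyadic pieces that are sorted into $C_i^{l+1}$ (those meeting $P$) and $E_i^{l+1}$ (those missing $P$). Hence, modulo the shared dyadic edges,
\begin{equation*}
\bigcup_{R \in C_i^{l}} R = \bigcup_{R \in C_i^{l+1}} R \ \cup\ \bigcup_{R \in E_i^{l+1}} R,
\end{equation*}
and iterating from level $0$ gives, for every $L$,
\begin{equation*}
[0,1]^2 = \bigcup_{l=0}^{L}\bigcup_{R \in E_i^{l}} R \ \cup\ \bigcup_{R \in C_i^{L}} R.
\end{equation*}
Passing to complements, this realizes $M = [0,1]^2 \setminus \bigcup_{R \in E_i} R$, up to a null set, as $\bigcap_{L \ge 0} \bigcup_{R \in C_i^{L}} R$.

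Next I would estimate the measure of the $L$-th occupied region. Each rectangle surviving to level $L$ has both sidelengths divided by $2^{n+1}$ at every step, so its area equals $2^{-n} \cdot 2^{-2L(n+1)}$. Since the rectangles of $C_i^{L}$ are pairwise disjoint and each contains at least one of the $N$ points of $P$, the bound $\sharp(C_i^{L}) \le N$ already noted in the recursive definition yields
\begin{equation*}
\Big|\bigcup_{R \in C_i^{L}} R\Big| \le N \cdot 2^{-n-2L(n+1)} \xrightarrow[L \to \infty]{} 0.
\end{equation*}
Because $M$ lies inside this occupied region for every $L$, I conclude $|M| = 0$.

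The argument is short, and the only point demanding a little care is the ``modulo shared edges'' qualification in the telescoping identity: the dyadic subrectangles overlap only along their boundaries. As these boundaries form a countable union of axis-parallel segments, they are Lebesgue-null and affect neither the partition of area nor the final geometric estimate, so no genuine obstacle arises.
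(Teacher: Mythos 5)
Your proof is correct and follows essentially the same route as the paper: the paper's entire proof is the single computation $\sum_{R\in C_i^l}|R| \le N\,2^{-n-2(n+1)l} \to 0$, which is exactly your key estimate. Your additional telescoping argument identifying $M$ (up to a null set) with $\bigcap_L \bigcup_{R\in C_i^L} R$ just makes explicit the containment the paper leaves implicit.
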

\begin{proof}
\begin{equation*}
\sum_{R\in C_i^l} |R|=\sharp(C_i^l)2^{-n-2(n+1)l}=N 2^{-n-2(n+1)l}\rightarrow 0, \text{ as } l\rightarrow +\infty.
\end{equation*}
\end{proof}
\begin{remark}\label{setzero}
Due to lemma \ref{setzerocalc} the function $f_i$ is defined almost everywhere on $[0,1]^2$. For definiteness, we will additionally take $f_i$ to be equal to $1$ on the remaining set of measure zero.
\end{remark}

We now investigate the properties of the functions $f_i$.
\begin{lemma}\label{squareone}
\begin{equation*}
f_i\colon [0,1]^2\rightarrow \lbrace -1,1 \rbrace
\end{equation*}
or equivalently
\begin{equation*}
|f_i(x,y)|\equiv 1.
\end{equation*}
\end{lemma}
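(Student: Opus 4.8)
The plan is to exploit the fact that the rectangles collected in $E_i$ form a partition of the unit square. First I would verify that the rectangles belonging to $E_i=\bigcup_l E_i^l$ are pairwise disjoint. This follows directly from the recursive construction: at Step $0$ the $2^n$ base rectangles are split into the empty family $E_i^0$ and the covered family $C_i^0$, and at each subsequent step the rectangles of $E_i^{l+1}$ and $C_i^{l+1}$ are obtained by subdividing \emph{only} the covered rectangles $R\in C_i^l$. Since a rectangle once declared empty is never subdivided again, telescoping the identity
\begin{equation*}
\bigsqcup_{R\in C_i^{l}}R=\Big(\bigsqcup_{R\in E_i^{l+1}}R\Big)\sqcup\Big(\bigsqcup_{R\in C_i^{l+1}}R\Big)
\end{equation*}
yields, for every $L$,
\begin{equation*}
[0,1]^2=\Big(\bigsqcup_{l=0}^{L}\ \bigsqcup_{R\in E_i^{l}}R\Big)\sqcup\Big(\bigsqcup_{R\in C_i^{L}}R\Big).
\end{equation*}
Thus the elements of $E_i$ are pairwise disjoint, and combined with Lemma \ref{setzerocalc} (the residual covered set $\bigcup_{R\in C_i^L}R$ shrinks to measure zero) they cover $[0,1]^2$ up to a null set.

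Next I would evaluate $f_i$ pointwise. Fix a point $(x,y)$ lying in some $R\in E_i$; by the partition property just established such points are generic, their complement being the null set $M$ of Lemma \ref{setzerocalc}. Because the rectangles of $E_i$ are pairwise disjoint, every other $R'\in E_i$ satisfies $(x,y)\notin R'$, so $h_{R'}(x,y)=0$. Consequently the defining sum collapses to a single term,
\begin{equation*}
f_i(x,y)=\sum_{R'\in E_i}h_{R'}(x,y)=h_R(x,y).
\end{equation*}
I should stress that the coarser Haar functions also drop out for the same reason: the point $(x,y)$ lies in a covered rectangle at every level $0,1,\dots,l-1$ before landing in $R\in E_i^l$, so no ancestor of $R$ belongs to $E_i$.

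Finally, by Definition \ref{haardefinition} we have $h_R(x,y)=h_{R_x}(x)\,h_{R_y}(y)$, and with the standard half-open convention for dyadic intervals each of the factors $h_{R_x}(x)$ and $h_{R_y}(y)$ equals $+1$ or $-1$ at every point of $R$; hence $h_R(x,y)=\pm1$ and $|f_i(x,y)|=1$ throughout $\bigcup_{R\in E_i}R$. On the complementary null set $M$ the value $1$ is assigned by the convention of Remark \ref{setzero}, so $|f_i|\equiv1$ everywhere. There is no serious obstacle in this argument; the only point requiring genuine care is the bookkeeping that turns the recursive construction into an honest partition of $[0,1]^2$, since it is precisely the disjointness that forces every contribution to the sum except one to vanish.
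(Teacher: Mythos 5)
Your proof is correct and takes essentially the same route as the paper's (one-sentence) proof: disjointness of the rectangles in $E_i$, coverage of $[0,1]^2$ up to the null set of Lemma \ref{setzerocalc} together with the convention of Remark \ref{setzero}, so that at almost every point the sum defining $f_i$ collapses to a single Haar value $\pm 1$. The only difference is that you spell out the telescoping bookkeeping behind the disjointness and the partition property, which the paper simply asserts.
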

\begin{proof}
The proof follows from the definition of the functions $f_i$ (see definition \ref{definitionauxiliary}) , from the remark \ref{setzero} and the fact that the elements of $E_i$ are disjoint.
\end{proof}
\begin{lemma}\label{short}
\begin{equation}
\int_0^1 \int_0^1 D_P f_i\leq -(2^n-N) \frac{N\cdot 2^{-2n}}{16}.
\end{equation} 
\end{lemma}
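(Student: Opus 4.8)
The plan is to expand the discrepancy as $D_P = C - L$, where $C(x,y)=\sharp(P\cap([0,x]\times[0,y]))$ is the counting part and $L(x,y)=Nxy$ is the linear part, and then to exploit the fact that the rectangles in $E_i$ are pairwise disjoint. Because the supports of the Haar functions $h_R$, $R\in E_i$, are disjoint and cover $[0,1]^2$ up to a set of measure zero (Lemma \ref{setzerocalc}), the function $D_P f_i$ agrees with $D_P h_R$ on each $R$, so that $\int_0^1\int_0^1 D_P f_i = \sum_{R\in E_i}\int_0^1\int_0^1 D_P h_R$. Thus everything reduces to evaluating a single inner product $\int D_P h_R$ for a point-free dyadic box $R=R_x\times R_y$.

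For that single box I would treat the two parts separately. For the linear part, the product structure $h_R = h_{R_x}h_{R_y}$ factors the integral, and a direct one-dimensional computation gives $\int_0^1 x\,h_{R_x}(x)\,dx = -|R_x|^2/4$ and likewise in $y$; hence $\int L\, h_R = N\cdot(-|R_x|^2/4)(-|R_y|^2/4)=N|R|^2/16$. For the counting part, writing $C(x,y)=\sum_{p\in P}\mathbf 1_{\{p_1\le x\}}\mathbf 1_{\{p_2\le y\}}$ and again using the product structure, each summand factors as $\big(\int_{p_1}^1 h_{R_x}\big)\big(\int_{p_2}^1 h_{R_y}\big)$. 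Since $h_{R_x}$ has mean zero, $\int_{p_1}^1 h_{R_x}$ vanishes unless $p_1$ lies in the interior of $R_x$, and similarly for the $y$-factor; so a summand is nonzero only when $p\in R$. As $R\in E_i$ contains no point of $P$, every summand vanishes and $\int C\, h_R = 0$. Combining the two, $\int D_P h_R = -N|R|^2/16$ for each $R\in E_i$.

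Summing over $E_i$ then yields $\int_0^1\int_0^1 D_P f_i = -\frac{N}{16}\sum_{R\in E_i}|R|^2$. Since every term $-N|R|^2/16$ is nonpositive, discarding all rectangles except those of the initial generation $E_i^0\subset E_i$ only increases the sum, giving $\int_0^1\int_0^1 D_P f_i \le -\frac{N}{16}\sum_{R\in E_i^0}|R|^2$. Every $R\in E_i^0$ has $|R|=2^{-i}\cdot 2^{i-n}=2^{-n}$, and by the Step $0$ count $\sharp(E_i^0)=2^n-\sharp(C_i^0)\ge 2^n-N$; therefore $\sum_{R\in E_i^0}|R|^2=\sharp(E_i^0)\,2^{-2n}\ge (2^n-N)\,2^{-2n}$, which delivers exactly the claimed bound.

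The only genuinely delicate point is the vanishing of the counting part: it rests on the observation that $\int_{p_1}^1 h_{R_x}$ is supported, as a function of $p_1$, on $R_x$, so that the mixed term survives only for points inside $R$, which is precisely what is forbidden for $R\in E_i$. This is the one place where the defining property of the families $E_i$ (point-free boxes) enters, and it is what makes the inner product reduce to the negative linear contribution alone. The remaining ingredients—the one-dimensional moment $\int_0^1 x\, h_{R_x}=-|R_x|^2/4$ and the cardinality bound $\sharp(E_i^0)\ge 2^n-N$—are routine.
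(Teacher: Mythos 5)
Your proposal is correct and follows essentially the same route as the paper: reduce to the per-rectangle inner products $\int_0^1\int_0^1 D_P h_R$ over point-free dyadic boxes, discard the (nonpositive) contributions of the later generations $E_i^l$, $l\geq 1$, and conclude via $|R|=2^{-n}$ and $\sharp(E_i^0)\geq 2^n-N$ for the initial generation. The only difference is that you also derive the identity $\int_0^1\int_0^1 D_P h_R=-N|R|^2/16$ from scratch (via the split $D_P=C-L$ and the one-dimensional moment computation), whereas the paper simply cites it as a well-known fact.
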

\begin{proof}
We use the following well-known fact about the interrelation between discrepancy and Haar functions (see e.g. \cite{C}): if the coordinate rectangle $R$ does not contain any point of the finite set $P$ then:
\begin{equation*}
\int_0^1\int_0^1 D_P h_R=-\frac{\sharp(P) |R|^2}{16}.
\end{equation*}
Let $E_i^0$ be as in the construction of the function $f_i$, then we have:
\begin{equation*}
\int_0^1\int_0^1 D_P\cdot f_i\leq\sum_{R\in E_i^0} \int_0^1\int_0^1 D_P\cdot h_R=
-\sum_{R\in E_i^0} \frac{N |R|^2}{16}\leq -(2^n-N) \frac{N\cdot 2^{-2n}}{16}.
\end{equation*}
\end{proof}
\begin{lemma}\label{long}
Let $0\leq i_1<i_2<\dots <i_p\leq n$.
Then
\begin{equation*}
a)\quad\int_0^1\int_0^1 f_{i_1}f_{i_2}\dots f_{i_p}=0,
\end{equation*}
and
\begin{equation*}
b)\quad\left|\int_0^1\int_0^1 D_P\cdot f_{i_1}f_{i_2}\dots f_{i_p}\right|\leq 2^{i_1-i_p}\frac{N2^{-n}}{16}.
\end{equation*}
\end{lemma}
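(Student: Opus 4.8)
The plan is to expand the product into Haar functions and to show that every nonvanishing term collapses to a single Haar function supported on a \emph{point-free} dyadic rectangle whose area carries the decay factor $2^{i_1-i_p}$. Writing $f_{i_j}=\sum_{R\in E_{i_j}}h_R$ and multiplying out,
\[
f_{i_1}\cdots f_{i_p}=\sum_{(R_1,\dots,R_p)}h_{R_1}\cdots h_{R_p},\qquad R_j\in E_{i_j}.
\]
A term is nonzero only when the $x$-projections $(R_1)_x,\dots,(R_p)_x$ are pairwise nested and the $y$-projections are pairwise nested. The key structural input is that a rectangle of $E_{i_j}$ produced at recursion step $l$ has $x$-side $2^{-(i_j+(n+1)l)}$ and $y$-side $2^{-((n-i_j)+(n+1)l)}$; hence its $x$-level is $\equiv i_j$ and its $y$-level is $\equiv n-i_j \pmod{n+1}$. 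Since $0\le i_1<\dots<i_p\le n$ the indices are distinct residues mod $n+1$, so in any term the $p$ one–dimensional $x$-Haar factors sit at \emph{distinct} dyadic levels (and likewise in $y$). Thus there is a unique finest $x$-interval $I^\ast$ and a unique finest $y$-interval $J^\ast$; on $I^\ast\times J^\ast$ all coarser one–dimensional factors are constant $\pm1$, so $h_{R_1}\cdots h_{R_p}=\varepsilon\,h_S$ with $\varepsilon\in\{-1,1\}$ and $S=I^\ast\times J^\ast=\bigcap_j R_j$. As $\int h_S=0$, part a) follows at once by summing over terms.

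For part b) I would record two facts about $S$. Let $j_0$ be the index supplying $I^\ast$, so $I^\ast=(R_{j_0})_x$. Since all $y$-intervals are nested and $J^\ast$ is the finest, $J^\ast\subseteq (R_{j_0})_y$, whence $S\subseteq R_{j_0}$; but $R_{j_0}\in E_{i_{j_0}}$ contains no point of $P$, so $S$ is point-free and the computation used in Lemma \ref{short} gives $\int D_P h_S=-N|S|^2/16$. Next, the finest $x$-level is at least the $x$-level of the factor $i_p$, hence $\ge i_p$, and the finest $y$-level is at least the $y$-level of the factor $i_1$, hence $\ge n-i_1$; therefore
\[
|S|=2^{-(\text{$x$-level of }I^\ast)}\,2^{-(\text{$y$-level of }J^\ast)}\le 2^{-(i_p+(n-i_1))}=2^{\,i_1-i_p}\,2^{-n}
\]
for every nonvanishing term.

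Finally I would sum. Using the two facts above and $|h_{R_1}\cdots h_{R_p}|=|S|$-supported,
\[
\left|\int_0^1\!\int_0^1 D_P f_{i_1}\cdots f_{i_p}\right|
\le \frac{N}{16}\sum_{(R_1,\dots,R_p)}|S|^2
\le \frac{N}{16}\Big(\max |S|\Big)\sum_{(R_1,\dots,R_p)}\left|\bigcap_{j} R_j\right|.
\]
The remaining sum telescopes to $1$: since each $E_{i_j}$ is a family of pairwise disjoint rectangles covering $[0,1]^2$ up to measure zero (Lemma \ref{setzerocalc}), one has $\sum_{R\in E_{i_j}}\mathbf 1_R=1$ a.e., so by Tonelli
\[
\sum_{(R_1,\dots,R_p)}\left|\bigcap_{j} R_j\right|
=\int_{[0,1]^2}\prod_{j=1}^p\Big(\sum_{R\in E_{i_j}}\mathbf 1_R\Big)=1 .
\]
Combining with $\max|S|\le 2^{\,i_1-i_p}2^{-n}$ yields exactly $2^{\,i_1-i_p}N2^{-n}/16$.

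I expect the main obstacle to be the bookkeeping of the second paragraph: proving cleanly that each surviving product is \emph{exactly one} Haar function $\pm h_S$ on a point-free rectangle, and pinning down the two scale inequalities that create the exponent $i_1-i_p$ (the upper end $i_p$ controlling the $x$-side, the lower end $i_1$ controlling the $y$-side). The orthogonality in a) and the normalization $\sum\left|\bigcap_j R_j\right|=1$ are then routine; the only analytic caveat is that each $f_{i_j}$ is an infinite Haar sum, so the interchanges of sum and integral should be justified by the finiteness $\sum|S|=1$, which gives $L^1$ (indeed $L^2$) convergence of the expansion.
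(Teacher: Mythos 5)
Your proof is correct and takes essentially the same route as the paper's: expand the product into Haar functions, use the fact that the dyadic levels arising from distinct $i_j$ are forced to be distinct (your mod-$(n+1)$ residue observation is exactly the paper's appeal to the repeated $2^{2(n+1)}$-fold subdivision) to collapse each nonvanishing term to $\pm h_S$ on a point-free intersection with $|S|\leq 2^{i_1-i_p}2^{-n}$, and then sum the areas to at most $1$. The only cosmetic difference is bookkeeping: you control $\sum\left|\bigcap_j R_j\right|$ by Tonelli over tuples, while the paper notes that the distinct nonempty intersections are pairwise disjoint subsets of $[0,1]^2$; these are the same count.
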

\begin{proof}
Let 
\begin{equation*}
R_{i_1}\in E_{i_1},R_{i_2}\in E_{i_2},\dots, R_{i_p}\in E_{i_p}.
\end{equation*}
Denote:
\begin{equation*}
R_{i_1}=R_{i_1,x}\times R_{i_1,y},R_{i_2}=R_{i_2,x}\times R_{i_2,y},\dots, R_{i_p}=R_{i_p,x}\times R_{i_p,y}
\end{equation*}
For definiteness assume:
\begin{equation*}
|R_{i_1,x}|=2^{-i_1-(n+1)l_1},|R_{i_1,y}|=2^{i_1-n-(n+1)l_1},
\end{equation*}
\begin{equation*}
|R_{i_2,x}|=2^{-i_2-(n+1)l_2},|R_{i_2,y}|=2^{i_2-n-(n+1)l_2},
\end{equation*}
\begin{equation*}
\vdots
\end{equation*}
\begin{equation*}
|R_{i_p,x}|=2^{-i_p-(n+1)l_p}, |R_{i_p,y}|=2^{i_p-n-(n+1)l_p}.
\end{equation*}
Note that the repeated subdivisions into $2^{2(n+1)}$ dyadic rectangles in the construction  in section \ref{orthogonality} guarantees that
 the lengths of the dyadic intervals $R_{i_1,x},\dots, R_{i_p,x}$ are mutually distinct,
similarly the lengths of the dyadic intervals  $R_{i_1,y},\dots, R_{i_p,y}$ are mutually distinct. Hence the product:
\begin{equation*}
h_{R_1}h_{R_2}\dots h_{R_p}
\end{equation*}
is either identically zero or (up to a sign) the Haar function on the intersection:
\begin{equation*}
R=\bigcap_{t=1}^p R_{i_t}.
\end{equation*}
Thus the first claim of the theorem follows.
\\
We now estimate the sidelengths of the dyadic rectangle $R$. Write $R=R_x\times R_y$ then:
\begin{equation*}
|R_x|\leq 2^{-i_p}, |R_y|\leq 2^{i_1-n},
\end{equation*}
and therefore
\begin{equation}\label{rectanglesize}
 |R|=|R_x\times R_y|\leq 2^{-i_p+i_1-n}.
\end{equation}
Now denote:
\begin{equation*}
E=\left\lbrace R=\bigcap_{t=1}^p R_{i_t}\;\colon \; R_{i_1}\in E_{i_1}, \dots, R_{i_p}\in E_{i_p}\right\rbrace.
\end{equation*}
Note that the elements of $E$ are mutually disjoint (as the elements of each $E_{i_t}$ are mutually disjoint). We also have:
\begin{equation*}
f_{i_1}f_{i_2}\dots f_{i_p}=\sum_{R\in E}\pm h_R.
\end{equation*}
Using \eqref{rectanglesize} we get:
\begin{equation*}
\left|\int_0^1\int_0^1 D_P\cdot f_{i_1}f_{i_2}\dots f_{i_p}\right|\leq\sum_{R\in E}\left|\int_0^1\int_0^1 D_P\cdot h_R\right|=
\end{equation*}
\begin{equation*}
=\sum_{R\in E}\frac{N|R|^2}{16}\leq \frac{N2^{-i_p+i_1-n}}{16} \sum_{R\in E} |R|=2^{i_1-i_p}\frac{N2^{-n}}{16}.
\end{equation*}
\end{proof}
\section{Combinatorial properties of the auxiliary functions}
We now explore the combinatorial properties of the auxiliary functions $f_i$ defined by \ref{definitionauxiliary}. We adapt the following notation from the theory of symmetric functions (see e.g. \cite{M}):
\begin{equation}\label{productsoffis}
e_1=e_1^n=\sum_{i=0}^n f_i,
\end{equation}
\begin{equation*}
e_2=e_2^n=\sum_{0\leq i_1<i_2 \leq n} f_{i_1}f_{i_2},
\end{equation*}
\begin{equation*}
e_3=e_3^n=\sum_{0\leq i_1<i_2<i_3\leq n} f_{i_1}f_{i_2}f_{i_3},
\end{equation*}
and so on.\\\indent
Let the number $k$ be odd ($k=1,3,5,\dots$). Then using the fact that $f_i^2\equiv 1$ (see lemma \ref{squareone}) we can write:
\begin{equation}\label{definitionofais}
(f_0+f_1+\dots+f_n)^k=A_1^n(k)e_1^n+A_3^n(k) e_3^n+A_5^n(k)e_5^n+\dots
\end{equation}
(here we assume that $A_p^n(k)=0$ once $p>\min(k,n+1)$).
\begin{remark}
For example we compute:
\begin{equation*}
(f_0+f_1)^3=f_0^3+3f_0f_1^2+3f_0^2f_1+f_1^3=4f_0+4f_1,
\end{equation*}
so that
$
A_1^1(3)=4.
$
\end{remark}
\begin{remark}\label{meaningofcomb}
It is important to point out that the coefficient of the term of \begin{equation*}(f_0+f_1+\dots+f_n)^k\end{equation*}  that is linear with respect to 
$
f_0+f_1+\dots+f_n$
is
$
A_1^n(k).
$
\end{remark}

\section{Test Functions}\label{symmetric}\indent\indent
We start this section by sketching Roth's orthogonal function method (as modified by Halasz \cite{H}).

Let $P\subset [0,1]^2$ be a finite set and let $f_0,f_1,\dots,f_n$ be the corresponding functions defined by \ref{definitionauxiliary}.
Let $H_n=H_n(z_0,z_1,\dots,z_n)$ be an odd, symmetric function that is entire in each of its $n+1$ variables. 

By $LIN(H_n)$ we will denote the coefficient of the term of the composite function $H_n(f_0,f_1,\dots,f_n)$ which is linear with respect to
$
\sum_{i=0}^n f_i.
$
For example, according to the remark \ref{meaningofcomb}:
$
LIN((f_0+\dots+f_n)^k)=A_1^n(k).
$

Assume that
\begin{equation}\label{normalizationassumption}
||H_n(z_0,z_1,\dots,z_n)||_{L_\infty([-1,1]^{n+1})}=1.
\end{equation}

Then we can estimate the $L_1$ norm of the discrepancy function $D_P(x,y)$ from below as follows:
\begin{equation*}
||D_P||_1\geq   \left|\int_0^1\int_0^1 D_P\cdot  H_n(f_0,f_1,\dots,f_n) \right|\geq LIN(H_n)\left| \sum_{i=0}^n\int_0^1\int_0^1 D_Pf_i\right|+Error,
\end{equation*}
where in Roth's orthogonal function method it is further proved that the term $Error$ is negligible
(estimating the term $Error$ from above by means of lemma \ref{long}).
Then lemma \ref{short} is applied to obtain the following lower bound for the $L_1$ norm of the discrepancy function:
\begin{equation}\label{witherror}
||D_P||_1\geq LIN(H_n)(n+1)(2^n-N)\frac{N2^{-2n}}{16}+Error,
\end{equation}
where the integers $N$ and $n$ are defined by \eqref{whatnis}.
\\\indent
Informally, Roth's orthogonal function method (as modified by Halasz \cite{H}) consists of finding appropriate functions $H_n$ (called test functions)
for which $LIN(H_n)$ grows as fast as possible, whereas the error term $Error$ is negligible, so that an appropriate lower estimate for $||D_P||_1$ follows.
\begin{remark}\label{halasztest}
In his proof G. Halasz \cite{H} used the following test function:
\begin{equation*}
G=\prod_{i=0}^n \left(1+\frac{i\gamma}{\sqrt{\log N}}f_i^0\right)-1,
\end{equation*}
where the auxiliary functions $f_i^0$ are as in remark \ref{originalauxiliary} and $\gamma>0$ is a certain small constant. This test function involves complex numbers. The choice of the test function $G$ was motivated by Riesz products. We will use a different approach by choosing our test function to be a solution of an extremal problem in Fourier analysis (see theorem \ref{naturalclass}).
\end{remark}
\begin{remark}\label{organization} The rest of the paper is organized as follows. We will first show that the problem of finding appropriate test functions $H_n$ reduces to finding a certain function $T$ (see
theorem \ref{asasum}), so that the quantity corresponding to $LIN(H_n)$ is:
\begin{equation}\label{linearpart}
LIN_n(T)=LIN\left(T\left(\frac{f_0+\dots+f_n}{\sqrt{n}}\right)\right).
\end{equation}
Then we will derive a precise formula for $LIN_n(T)$ (see section \ref{extreme}). After that we
will find the function $T$ that maximizes the quantity:
\begin{equation}
\limsup \sqrt{n}\cdot LIN_n(T)
\end{equation}
in a certain natural class of functions (see theorem \ref{naturalclass}) and show that for that function the error
term $Error$ appearing in \eqref{witherror} can be neglected (see section \ref{main}).
\end{remark}
\begin{theorem}\label{asasum}
Without loss of generality one can assume that the test functions $H_n$ have the following form:
\begin{equation*}
H_n(f_0,f_1,\dots,f_n)=T\left(\frac{f_0+f_1+\dots+f_n}{\sqrt{n}}\right).
\end{equation*}
\end{theorem}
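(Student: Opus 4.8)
The plan is to exploit Lemma \ref{squareone}, which tells us that each auxiliary function takes only the values $\pm 1$. Consequently the composite $H_n(f_0,\dots,f_n)$ — which is the only object that actually enters the lower bound \eqref{witherror} — depends on $H_n$ solely through the restriction of $H_n$ to the vertex set $\{-1,1\}^{n+1}$. First I would observe that, since $H_n$ is symmetric, its restriction to $\{-1,1\}^{n+1}$ is invariant under permutations of the coordinates, hence depends only on how many coordinates equal $+1$; equivalently, there is a function $\psi$ defined on the finite set of attainable values of $z_0+\dots+z_n$, namely the $n+2$ numbers $2k-(n+1)$ for $0\le k\le n+1$, such that $H_n(z_0,\dots,z_n)=\psi(z_0+\dots+z_n)$ for every $z\in\{-1,1\}^{n+1}$. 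Oddness of $H_n$ forces $\psi$ to be odd, and since these nodes are symmetric about the origin (with $0$ attainable precisely when $n+1$ is even, in which case $\psi(0)=0$), this is consistent.

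Next I would verify that every quantity used by the method depends on $H_n$ only through $\psi$. Indeed $H_n(f_0,\dots,f_n)=\psi\bigl(\sum_i f_i\bigr)$ almost everywhere, so the composite is literally determined by $\psi$; therefore its $L_\infty([0,1]^2)$ norm equals $\max_s|\psi(s)|$, the pairing $\int_0^1\int_0^1 D_P\cdot H_n(f_0,\dots,f_n)$ is unchanged, the coefficient $LIN(H_n)$ (the coefficient of $e_1$ in the expansion of the composite in $e_1,e_3,e_5,\dots$) is unchanged, and the error term estimated through Lemma \ref{long} is unchanged. The point is that all of these are functionals of the composite function on $[0,1]^2$, and the composite sees $H_n$ only at the Boolean vertices.

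I would then manufacture the desired single-variable function. Define $T$ on the $n+2$ points $s/\sqrt{n}$ (for attainable $s$) by $T(s/\sqrt{n})=\psi(s)$, and extend it to an odd polynomial; this is possible because $\psi$ is odd and the interpolation nodes are symmetric about $0$. Setting $\widetilde H_n(z_0,\dots,z_n)=T\bigl(\tfrac{z_0+\dots+z_n}{\sqrt{n}}\bigr)$ produces an odd, symmetric function that is entire in each of its variables and satisfies $\widetilde H_n(f_0,\dots,f_n)=T\bigl(\tfrac{f_0+\dots+f_n}{\sqrt{n}}\bigr)=\psi\bigl(\sum_i f_i\bigr)=H_n(f_0,\dots,f_n)$. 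Hence $\widetilde H_n$ reproduces the same composite, the same admissible normalization $\max_s|\psi(s)|\le 1$, the same value of $LIN$, and therefore the same lower bound as $H_n$, which is exactly the asserted reduction.

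The one thing that must be handled with care is the normalization bookkeeping: replacing $H_n$ by $\widetilde H_n$ may well change the sup norm over the full cube $[-1,1]^{n+1}$, since $T$ can oscillate between the interpolation nodes. The potential trap is to insist on preserving the cube norm in \eqref{normalizationassumption}. The resolution is that the lower bound is driven only by $\|H_n(f_0,\dots,f_n)\|_{L_\infty([0,1]^2)}$, and because the $f_i$ are $\pm 1$-valued this equals the vertex maximum $\max_s|\psi(s)|$, which the construction preserves exactly; the behavior of $T$ off the nodes is irrelevant to the method. Establishing this clean separation — that nothing beyond the vertex values is ever used — is the crux of the argument.
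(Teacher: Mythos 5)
Your reduction to a function of the sum is correct and genuinely different from the paper's. Where the paper invokes the main theorem on symmetric functions together with Newton's identities and $f_i^2\equiv 1$ to conclude that $H_n(f_0,\dots,f_n)$ depends only on $p_1=\sum_i f_i$, you restrict $H_n$ to the Boolean cube $\lbrace -1,1\rbrace^{n+1}$, use symmetry to see that the restriction depends only on $z_0+\cdots+z_n$, and then interpolate the vertex values by an odd polynomial. This is more elementary, and your observation that only the vertex values of $H_n$ ever enter the method (so that the cube normalization \eqref{normalizationassumption} may be relaxed to a vertex normalization) is sound; it is consistent with how the paper itself converts \eqref{normalizationassumption} into the condition $\sup_{(-\sqrt{n},\sqrt{n})}|T_n(x)|=1$.

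There is, however, a genuine gap: your $T$ depends on $n$. What you construct is an interpolating polynomial at the $n+2$ nodes $s/\sqrt{n}$, so what you have proved is that for each fixed $n$ one may write $H_n=T_n\bigl(\tfrac{f_0+\cdots+f_n}{\sqrt{n}}\bigr)$ for some odd single-variable $T_n$. The theorem, as it is used in the rest of the paper, asserts more: a single function $T$, independent of $n$, serving all the $H_n$ simultaneously. This is exactly what makes the extremal problem of Theorem \ref{naturalclass} (maximizing $\limsup_n \sqrt{n}\cdot LIN_n(T)$ over a fixed $T$ with $\|T\|_\infty=1$) meaningful, and it is what the paper's claim of obtaining the best constants within Roth's method rests on. The paper devotes the entire second half of its proof to this passage: the family $\lbrace T_n\rbrace$ of odd entire functions is treated by a normal-families argument, a locally uniformly convergent subsequence $T_n\to T$ is extracted, and the central limit theorem for $\tfrac{1}{\sqrt{n}}(f_0+\cdots+f_n)$ is used to turn the constraints $\sup_{(-\sqrt{n},\sqrt{n})}|T_n|=1$ into $\sup_{\mathbb{R}}|T|=1$ for the limit. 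Your construction does not set this step up: interpolating polynomials with bounded nodal values can oscillate arbitrarily between equispaced nodes (Runge phenomenon), so your family $\lbrace T_n\rbrace$ need not be locally uniformly bounded and no limiting $T$ need exist. To complete the proof you would need either to add this compactness/limit argument or to control your interpolants well enough that a fixed $T$ can be extracted.
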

\begin{proof}
According to the "Main theorem on symmetric functions" (see e.g. \cite{M}) the symmetric function
$H_n(f_0,f_1,\dots,f_n)$ can be written as a function depending only on the functions $e_i$ (introduced by \eqref{productsoffis}). We borrow the following notation from the theory of symmetric functions:
\begin{equation*}
p_k=\sum_{0\leq j\leq n}f_{j}^k,\quad 1\leq k\leq n+1.
\end{equation*}
Recalling Newton's identities:
\begin{equation*}
ke_k=\sum_{i=1}^k (-1)^{i-1} e_{k-i} p_i,
\end{equation*}
we obtain that a symmetric function can be written as an analytic function depending only on the functions $p_i$. Taking into account that $f_i^2=1$ (see lemma \ref{squareone}) we have:
\begin{equation*}
p_k=p_1,\quad 1\leq k\leq n+1,\quad k \text{ is odd},
\end{equation*}
and
\begin{equation*}
p_k=n+1=p_0,\quad 1\leq k\leq n+1,\quad k\text{ is even}.
\end{equation*}
Thus, the function $H_n$ depends only on $p_0$ and $p_1$, where $p_0$ is a constant. Recalling that $H_n$ is odd, it turns out that $H_n$ has to depend only on:
\begin{equation*}
p_1=f_0+\dots+f_n,
\end{equation*} i.e. we can write the function $H_n$ in the following form:
\begin{equation*}
H_n(f_0,f_1,\dots,f_n)=F_n(f_0+\dots+f_n).
\end{equation*}
Introduce functions $T_n$ as follows:
\begin{equation*}
F_n(f_0+\dots+f_n)=T_n\left(\frac{f_0+\dots+f_n}{\sqrt{n}}\right).
\end{equation*}
Then the condition \eqref{normalizationassumption} becomes:
\begin{equation*}
\sup_{(-\sqrt{n},\sqrt{n})}|T_n(x)|=1.
\end{equation*}

Recall that $\lbrace T_n (z)\rbrace_{n=1}^{\infty}$ is a family of odd, entire functions. Either it is normal or it goes locally uniformly to $0$, or to $\infty$. The latter two cases are not interesting, so let us assume that $T_n$ has a subsequence that tends to a certain odd, entire function $T$ locally uniformly in the complex plane. For clarity of notation we will assume that $T_n$ itself tends to $T$.

Note that the functions $f_i$ are non-correlated on the unit square and take the values $1$ and $-1$ with equal probability, thus:
\begin{equation*}
\frac{f_0+\dots+f_n}{\sqrt{n}}\rightarrow N(0,1).
\end{equation*}
So that the conditions: \begin{equation*}
\sup_{(-\sqrt{n},\sqrt{n})}|T_n(x)|=1
\end{equation*} imposed on the functions $T_n$ induce the following condition on the limiting function $T$ :
\begin{equation*}
\sup_{-\infty<x<+\infty}|T(x)|=1,
\end{equation*}
and instead of discussing the functions:
\begin{equation*}
T_n \left(\frac{f_0+\dots+f_n}{\sqrt{n}}    \right)
\end{equation*}
we can discuss the functions:
\begin{equation*}
T \left(\frac{f_0+\dots+f_n}{\sqrt{n}}    \right).
\end{equation*}
\end{proof}
%
%
%
%
%
%

\section{Generating function of $A_1^n(k)$}\indent\indent
In this section we derive the formula \eqref{meaningofprob} for the generating function of the sequence $\lbrace A_1^n(k)\rbrace_{k=1,3,\dots}$ introduced by the identity \eqref{definitionofais}.

Denote by $B_n$ the binomial distribution, i.e. the distribution of the sum of $n$ independent random variables which take
values $1$ and $0$ with probability $1/2$. 
Denote by $S_n$ the distribution of $n$ independent random variables which take values $1$ and $-1$ with probability $1/2$.
Then
\begin{equation*}
S_n=2B_n-n.
\end{equation*}
Denote by $M_{B_n}$ the moment-generating function of the binomial distribution $B_n$, that is:
\begin{equation*}
M_{B_n}(t)=E(e^{t B_n})=\sum_{k=0}^{\infty}\frac{1}{k!}E\left(\left(B_n\right)^k\right)t^k,
\end{equation*}
where $E(X)$ is the mathematical expectation of a random variable with distribution $X$. 
\newline
We know that:
\begin{equation*}
M_{B_n}(t)=\left(\frac{1}{2}+\frac{1}{2}e^t\right)^n.
\end{equation*}
As a consequence we get:
\begin{equation*}
M_{2B_{n}-n+1}(t)=E\left(e^{(2B_{n}-n+1)t}\right)=e^{(-n+1)t}M_{B_{n}}(2t)=
\end{equation*}
\begin{equation*}
=e^{(-n+1)t}\left(\frac{1}{2}+\frac{1}{2}e^{2t}\right)^{n}=e^{t}\left(\frac{e^t+e^{-t}}{2}\right)^{n}.
\end{equation*}

Let $x_0,x_1,\dots,x_n$ be independent random variables, where $x_0\equiv 1$ and $x_1,\dots , x_n$  take
values $1$ and $-1$ with probability $1/2$. Let $k$ be an odd number. We have:
\begin{equation*}
E\left((x_0+\dots+x_n)^k\right)=A_1^n(k).
\end{equation*}
But on the other hand, the sum $x_0+\dots+x_n$ has the distribution $1+S_{n}$. We note that $1+S_{n}=2B_{n}-n+1$, so that:
\begin{equation}\label{meaningofprob}
\sum_{k=1,3,\dots}\frac{1}{k!}A_1^n(k)t^k=\sum_{k=1,3,\dots}\frac{1}{k!}E\left((x_0+x_1+\dots+x_n)^k\right)t^k=
\end{equation}
\begin{equation*}
=\frac{
M_{2B_n-n+1}(t)-M_{2B_n-n+1}(-t)
}
{2}=\frac{e^t-e^{-t}}{2}\left(
\frac{e^t+e^{-t}}{2}
\right)^n.
\end{equation*}

\section{Precise formula for the linear part}\label{extreme}

Recall (see equation \eqref{linearpart}) that:
\begin{equation*}
LIN_n(T)=LIN\left(T\left(\frac{f_0+\dots+f_n}{\sqrt{n}}\right)\right).
\end{equation*}
In this section we will derive the precise formula \eqref{conciseformula3} for $LIN_n(T)$.\\
\noindent
We have:
\begin{equation*}
T\left(\frac{f_0+\dots+f_n}{\sqrt{n}}   \right)=T^{\prime}(0)\frac{\sum_{i=0}^n f_i}{\sqrt{n}}+\frac{T^{(3)}(0)}{3!}\frac{\left(\sum_{i=0}^n f_i\right)^3}{(\sqrt{n})^3}+\dots.
\end{equation*}
The coefficient of the term of this expansion that depends on $\sum_{i=0}^n f_i$ linearly is:
\begin{equation*}
LIN_n(T)= \left[ \frac{T^{\prime}(0)}{\sqrt{n}}+  
 \frac{T^{(3)}(0)}{3!(\sqrt{n})^3}A_1^{n}(3)+\frac{T^{(5)}(0)}{3!(\sqrt{n})^3}A_1^{n}(5)+\dots
 \right],
\end{equation*}
or in terms of the differentiation operator $\partial$:
\begin{equation*}
LIN_n(T)=
 \left[ \frac{\partial}{\sqrt{n}}+  
 \frac{\partial^3}{3!(\sqrt{n})^3}A_1^{n}(3)+\frac{\partial^5}{3!(\sqrt{n})^3}A_1^{n}(5)+\dots
 \right](T)(0).
\end{equation*}
\\\noindent
Using the formula \eqref{meaningofprob}, we get: 
\begin{equation}\label{conciseformula2}
LIN_n(e^{i\omega x})=
\left(\frac{e^{i\omega/\sqrt{n}}-e^{-i\omega/\sqrt{n}}}{2}\right)\left(
\frac{e^{i\omega/\sqrt{n}}+e^{-i\omega/\sqrt{n}}}{2}
\right)^n,
\end{equation}
\\\noindent
or in general:
\begin{equation}\label{conciseformula3}
LIN_n(T)=
\left[\left(\frac{e^{\partial/\sqrt{n}}-e^{-\partial/\sqrt{n}}}{2}\right)\left(
\frac{e^{\partial/\sqrt{n}}+e^{-\partial/\sqrt{n}}}{2}
\right)^n       \right](T)(0).
\end{equation}


%
%
%
%
%
%
%
%

\section{Solution of the extremal problem}\label{solution}\indent\indent
Recall (see remark \ref{organization}) that we are looking for an odd function $T$ (introduced in theorem \ref{asasum}) satisfying:
\begin{equation*}
\sup_{-\infty<x<\infty}|T(x)|=1,
\end{equation*}
for which the quantity:
\begin{equation*}
\limsup \sqrt{n} \cdot LIN_n(T)
\end{equation*}
is maximal. 

The following theorem proves that under some additional conditions on $T$ this extremal problem has a solution and that solution is:
\begin{equation*}
T(x)=\sin(x).
\end{equation*}
\begin{theorem}\label{naturalclass} If the function $T$ satisfies the following conditions:
\\
a) the support of the Fourier transform of $T$ is countable, and the sequence
$\lbrace \omega_j \rbrace_{j=1}^{\infty}$  is linearly independent over the set $Z$ of all integers:
\begin{equation}\label{whatist}
T(x)=\sum_{j=1}^{\infty} c_j e^{-i\omega_j x},
\end{equation}
\\
b) $T$ is an odd function,
\\
c) $||T||_{\infty}=1$
\\
then
\begin{equation}\label{theinequality}
\limsup \sqrt{n} \cdot LIN_n(T)\leq \frac{1}{\sqrt{e}}
\end{equation}
and the maximum is obtained for the function:
\begin{equation*}
T(x)=\sin(x).
\end{equation*}
\end{theorem}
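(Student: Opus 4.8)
The plan is to reduce everything to the action of $LIN_n$ on a single exponential, extract the limiting profile, and then use the linear independence of the frequencies to turn the normalization $\|T\|_\infty=1$ into a sharp bound on the coefficients.

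First I would read off the limiting behaviour of \eqref{conciseformula2}. Writing the two factors as $i\sin(\omega/\sqrt n)$ and $\cos^n(\omega/\sqrt n)$ gives
\[
\sqrt n\,LIN_n(e^{i\omega x})=i\,\sqrt n\,\sin\!\bigl(\omega/\sqrt n\bigr)\,\cos^n\!\bigl(\omega/\sqrt n\bigr).
\]
Since $\sqrt n\,\sin(\omega/\sqrt n)\to\omega$ and $\cos^n(\omega/\sqrt n)\to e^{-\omega^2/2}$, this converges to $i\,\phi(\omega)$, where $\phi(\omega):=\omega e^{-\omega^2/2}$. An elementary one–variable computation shows that $\phi$ is maximized at $\omega=1$ with $\phi(1)=e^{-1/2}=1/\sqrt e$ and that $|\phi(\omega)|\le 1/\sqrt e$ for all real $\omega$; this is the source of the constant in \eqref{theinequality}.

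Next I would pass from a single exponential to \eqref{whatist} by linearity. Since $T$ is odd and real it is convenient to write it as a sine series $T(x)=\sum_j b_j\sin(\omega_j x)$ with $\omega_j>0$; the computation above then yields $\sqrt n\,LIN_n(\sin(\omega_j\cdot))=\sqrt n\,\sin(\omega_j/\sqrt n)\cos^n(\omega_j/\sqrt n)\to\phi(\omega_j)$, so that formally
\[
\lim_{n\to\infty}\sqrt n\,LIN_n(T)=\sum_j b_j\,\phi(\omega_j).
\]
To justify interchanging the limit with a possibly infinite sum I would use dominated convergence: the profiles $\omega\mapsto\sqrt n\,\sin(\omega/\sqrt n)\cos^n(\omega/\sqrt n)$ are bounded uniformly in $n$ by a single constant (a short calculus argument shows their supremum is itself essentially $1/\sqrt e$), so absolute summability of the $b_j$ — established in the next step — supplies the needed dominating sequence.

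The decisive step, and the one I expect to be the main obstacle, is to convert $\|T\|_\infty=1$ into the $\ell^1$ identity $\sum_j|b_j|=1$; this is exactly where hypothesis (a) is used. Because $\{\omega_j\}$ is linearly independent over $\mathbb Z$ (hence over $\mathbb Q$), Kronecker's theorem says the orbit $x\mapsto(\omega_j x\bmod 2\pi)_j$ is dense in the torus, so one can choose $x$ making every $\sin(\omega_j x)$ simultaneously close to $\operatorname{sign}(b_j)$; this forces $\|T\|_\infty\ge\sum_j|b_j|$, while the triangle inequality gives the reverse, whence $\sum_j|b_j|=\|T\|_\infty=1$. (For infinitely many frequencies this requires applying Kronecker to finite subfamilies together with a truncation argument, which is the delicate point; note that without independence only the Bessel bound $\sum_j|b_j|^2\le1$ is available, and Cauchy--Schwarz then yields merely $\sqrt{2/\pi}$.) Combining the pieces,
\[
\Bigl|\lim_{n\to\infty}\sqrt n\,LIN_n(T)\Bigr|\le\sum_j|b_j|\,|\phi(\omega_j)|\le\frac{1}{\sqrt e}\sum_j|b_j|=\frac{1}{\sqrt e},
\]
which is \eqref{theinequality}. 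Finally, equality throughout forces all the weight $|b_j|$ onto the maximizer $\omega=1$ of $\phi$, i.e. $T(x)=\sin x$, and the first paragraph gives the direct check $\sqrt n\,LIN_n(\sin)=\sqrt n\,\sin(1/\sqrt n)\cos^n(1/\sqrt n)\to 1/\sqrt e$, so the bound is attained and the maximizer is $\sin x$.
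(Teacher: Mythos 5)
Your proposal is correct and follows essentially the same route as the paper: both extract the limiting profile $\omega e^{-\omega^2/2}$ from \eqref{conciseformula2}, convert the normalization $\|T\|_{\infty}=1$ into the $\ell^1$ identity $\sum_j |c_j|=1$ via the linear independence of the frequencies, and finish with the elementary maximization of $\omega e^{-\omega^2/2}$ at $\omega=1$ to identify $\sin(x)$ as the extremizer. The only difference is one of detail, not of substance: where you prove the sup-norm identity directly from Kronecker's theorem and justify the limit interchange by dominated convergence, the paper cites the equivalent result from Katznelson's book and treats the term-by-term application of $LIN_n$ as formal by definition.
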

\begin{remark}\label{formalderivative}
In theorem \ref{naturalclass} by $LIN_n(T)$ we understand the operator $LIN_n$ formally applied to the series \eqref{whatist} by formula \eqref{conciseformula2}.
\end{remark}
\begin{remark}
In theorem \ref{naturalclass} the inequality \eqref{theinequality} is understood in the sense that if the left hand side is real then the inequality is satisfied.
\end{remark}
\begin{proof}
Write
\begin{equation*}
T(x)=\sum_{j=1}^{\infty} c_j e^{-ix\omega_j}.
\end{equation*}
Note that according to \cite{K} (page 183, paragraph 6.9, theorem 9.3) we have:
\begin{equation*}
\sup_{-\infty<x<\infty}|T(x)|=\sum_{j=1}^{\infty}|c_j|.
\end{equation*}
Hence:
\begin{equation*}
\sum_{j=1}^{\infty}|c_j|=1.
\end{equation*}
On the other hand, according to the remark \ref{formalderivative} we get:
\begin{equation*}
\limsup \sqrt{n} \cdot LIN_n(T)=\lim  \sqrt{n} \cdot LIN_n(T)=\left[\partial e^{\partial^2/2} \right](T)(0).
\end{equation*}
We calculate:
\begin{equation*}
\left[\partial e^{\partial^2/2}\right](T)(x)=-i\sum_{j=1}^{\infty} \omega_j e^{-\omega_j^2/2}e^{-ix\omega_j}\cdot c_j.
\end{equation*}
Plugging in $x=0$, we get:
\begin{equation*}
\left[\partial e^{\partial^2/2}\right](T)(0)=-i\sum_{j=1}^{\infty} \omega_j e^{-\omega_j^2/2}\cdot c_j.
\end{equation*}
In order to continue with the proof note the following remark:
\begin{remark}
We have:
\begin{equation*}
\max_{-\infty<\omega<\infty} \omega e^{-\omega^2/2}=\frac{1}{\sqrt{e}},
\end{equation*}
and the maximum is obtained at the point $\omega=1$.
Similarly:
\begin{equation*}
\min_{-\infty<\omega<\infty} \omega e^{-\omega^2/2}=-\frac{1}{\sqrt{e}},
\end{equation*}
and the minimum is obtained at the point $\omega=-1$.
\end{remark}
\noindent Thus, in order that the maximum of the quantity:
\begin{equation*}
-i\sum_{j=1}^{\infty} \omega_j e^{-\omega_j^2/2}\cdot c_j
\end{equation*}
subject to the restriction:
\begin{equation*}
\sum_{j=1}^{\infty} |c_j|=1
\end{equation*}
is obtained we need the sequence $\lbrace \omega_j \rbrace$ to consist of only $2$ points, namely: $1$ and $-1$.
As a consequence we get that the odd function $T$ that maximizes the quantity:
\begin{equation*}
\left[\partial e^{\partial^2/2}\right](T)(0)
\end{equation*} 
subject to the conditions stated in the theorem is:
\begin{equation*}
T(x)=\frac{i}{2}e^{-ix}+\frac{(-i)}{2}e^{ix}=\sin(x),
\end{equation*}
and in that case:
\begin{equation*}
\left[\partial e^{\partial^2/2}\right](t)(0)=\frac{1}{\sqrt{e}}.
\end{equation*}
\end{proof}
%
%
%
%
%
%
%
%
\section{Derivation of the best constant}\label{main}
\begin{theorem}
We have
\begin{equation*}
\liminf_{N\rightarrow \infty} \frac{1}{\sqrt{\ln{N}}} \inf_{\sharp(P)=N} \int_0^1\int_0^1 |D_P|\geq \frac{3}{256\sqrt{e\ln{2}}}
\end{equation*}
and
\begin{equation*}
\limsup_{N\rightarrow \infty} \frac{1}{\sqrt{\ln{N}}} \inf_{\sharp(P)=N} \int_0^1\int_0^1 |D_P|\geq \frac{1}{64\sqrt{e\ln{2}}}
\end{equation*}
\end{theorem}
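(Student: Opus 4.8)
The plan is to apply the general lower bound \eqref{witherror} with the extremal test function $T(x)=\sin x$ singled out in Theorem \ref{naturalclass}, keeping exact track of both the constant in front of the leading $\sqrt{n}$ term and the size of $Error$. Setting $H_n=\sin\big(\tfrac{f_0+\dots+f_n}{\sqrt n}\big)$, condition \eqref{normalizationassumption} holds since $||\sin||_\infty=1$, so that $||D_P||_1\ge\big|\int_0^1\int_0^1 D_P H_n\big|$. First I would expand $H_n$ concretely: using $f_j^2\equiv 1$ (Lemma \ref{squareone}) we may write $e^{\pm i f_j/\sqrt n}=\cos\alpha\pm i\sin\alpha\, f_j$ with $\alpha=1/\sqrt n$, turning $e^{i(f_0+\dots+f_n)/\sqrt n}$ into a Riesz-type product and hence $H_n$ into a signed sum over subsets $S\subseteq\{0,\dots,n\}$ of odd cardinality, with $\prod_{i\in S}f_i$ carrying the coefficient $(\cos\alpha)^{\,n+1-|S|}(\sin\alpha)^{|S|}$. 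The subsets with $|S|=1$ reproduce exactly the linear part, with coefficient $LIN_n(\sin)=\sin\alpha\,(\cos\alpha)^n$ (consistent with $\sqrt n\,LIN_n(\sin)\to 1/\sqrt e$ from Theorem \ref{naturalclass}); the subsets with $|S|\ge 3$ form $Error$.

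For the main term I would feed each $\int_0^1\int_0^1 D_P f_i$ into Lemma \ref{short}, giving $\sum_{i=0}^n\int_0^1\int_0^1 D_P f_i\le -(n+1)(2^n-N)\tfrac{N 2^{-2n}}{16}$. Writing $\theta=N2^{-n}$, the range \eqref{whatnis} forces $\theta\in(1/4,1/2]$, and one computes $(2^n-N)\tfrac{N2^{-2n}}{16}=\tfrac{\theta(1-\theta)}{16}$. Hence the linear contribution to $\big|\int_0^1\int_0^1 D_P H_n\big|$ equals $LIN_n(\sin)\,(n+1)\tfrac{\theta(1-\theta)}{16}=\big(\tfrac{1}{\sqrt e}+o(1)\big)\sqrt n\cdot\tfrac{\theta(1-\theta)}{16}$.

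The crux is to show $Error=o(\sqrt n)$, for which I would bound each $\big|\int_0^1\int_0^1 D_P\prod_{i\in S}f_i\big|$ by Lemma \ref{long}b, namely by $2^{\,i_1-i_m}\tfrac{N2^{-n}}{16}$ for $S=\{i_1<\dots<i_m\}$. Then $Error$ is dominated by $\tfrac{N2^{-n}}{16}\sum_{m\ge 3\text{ odd}}(\cos\alpha)^{n+1-m}(\sin\alpha)^m\,\Sigma_m$, where $\Sigma_m=\sum_{0\le i_1<\dots<i_m\le n}2^{\,i_1-i_m}$. The decisive step is the uniform bound $\Sigma_m\le n+1$: grouping by $d=i_m-i_1$ gives $\Sigma_m\le(n+1)\sum_{d\ge m-1}\binom{d-1}{m-2}2^{-d}$, and the identity $\sum_{j\ge r}\binom{j}{r}x^{j}=x^{r}(1-x)^{-r-1}$ evaluated at $x=1/2$ collapses the inner sum to $1$. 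Since $\sum_{m\ge 3\text{ odd}}(\cos\alpha)^{n+1-m}(\sin\alpha)^m=(\cos\alpha)^{n+1}\tfrac{\tan^3\alpha}{1-\tan^2\alpha}=O(n^{-3/2})$ (using $\alpha=n^{-1/2}$ and $(\cos\alpha)^{n+1}\to e^{-1/2}$), multiplying by the bounded factor $\tfrac{(n+1)\theta}{16}$ yields $Error=O(n^{-1/2})$, negligible against the $\sqrt n$ main term. I expect this uniform estimate on $\Sigma_m$ to be the main obstacle; it is precisely the geometric weight $2^{i_1-i_m}$ supplied by Lemma \ref{long}b that makes Halasz's complex Riesz product unnecessary, the real-valued $\sin$ now sufficing.

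Finally I would convert scales. Since $\ln N=n\ln 2+\ln\theta$ with $\theta\in(1/4,1/2]$ bounded, $\sqrt n/\sqrt{\ln N}\to 1/\sqrt{\ln 2}$ uniformly, giving, for every $P$ with $\sharp(P)=N$,
\begin{equation*}
\frac{1}{\sqrt{\ln N}}\,||D_P||_1\ge\Big(\frac{1}{\sqrt{e\ln 2}}+o(1)\Big)\frac{\theta(1-\theta)}{16}.
\end{equation*}
Taking $\inf_P$ preserves this bound since the $o(1)$ depends only on $N$. For the $\liminf$ claim I minimize $\theta(1-\theta)$ over $\theta\in(1/4,1/2]$; as this product is increasing on $(0,1/2)$, its infimum is $3/16$ (attained in the limit $\theta\to 1/4$, which does occur for suitable $N$), yielding $\tfrac{3}{256\sqrt{e\ln 2}}$. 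For the $\limsup$ claim I restrict to the subsequence $N=2^{\,n-1}$, where $\theta=1/2$ exactly and $\theta(1-\theta)=1/4$, yielding $\tfrac{1}{64\sqrt{e\ln 2}}$.
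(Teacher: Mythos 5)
Your proposal is correct and follows essentially the same route as the paper: the test function $\sin\bigl((f_0+\dots+f_n)/\sqrt{n}\bigr)$ expanded as the imaginary part of a Riesz-type product, Lemma \ref{short} for the linear term, Lemma \ref{long}(b) for the error terms, and the parametrization $\theta=N2^{-n}\in(1/4,1/2]$ with $\theta(1-\theta)$ giving $3/16$ in the $\liminf$ case and $1/4$ along $N=2^{n-1}$ in the $\limsup$ case. The only difference is that you bound all odd orders $m\geq 3$ uniformly via the binomial identity $\sum_{d\geq m-1}\binom{d-1}{m-2}2^{-d}=1$, whereas the paper treats the cubic term explicitly and dismisses the higher-order terms as ``estimated similarly'' --- your version is, if anything, more complete on that point.
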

\begin{proof}
Taking $T(x)=\sin(x)$ we get:
\begin{equation*}
T\left(\frac{f_0+\dots+f_n}{\sqrt{n}}\right)=\sin\left(\frac{f_0+\dots+f_n}{\sqrt{n}}\right)=Im\left[\prod_{j=0}^n\left(\cos\left(\frac{1}{\sqrt{n}}\right)+i\cdot f_j\cdot \sin\left(\frac{1}{\sqrt{n}}\right)\right)\right]=
\end{equation*}
\begin{equation*}
=\cos^n\left(\frac{1}{\sqrt{n}}\right)\sin\left(\frac{1}{\sqrt{n}}\right)\left(\sum_{j=0}^n f_j\right)
-3! \cos^{n-2}\left(\frac{1}{\sqrt{n}}\right)\sin^{3}\left(\frac{1}{\sqrt{n}}\right)\left(\sum_{0\leq j_1<j_2<j_3} f_{j_1}f_{j_2}f_{j_3}\right)+\dots.
\end{equation*}
So that:
\begin{equation*}
\int_0^1\int_0^1 D_P \sin\left(\frac{f_0+\dots+f_n}{\sqrt{n}}\right) =
\end{equation*}
\begin{equation*}
=\cos^n\left(\frac{1}{\sqrt{n}}\right)\sin\left(\frac{1}{\sqrt{n}}\right)\left(\sum_{j=0}^n\int_0^1\int_0^1 D_P f_j \right)-
\end{equation*}
\begin{equation*}
-3! \cos^{n-2}\left(\frac{1}{\sqrt{n}}\right)\sin^{3}\left(\frac{1}{\sqrt{n}}\right)\left(\sum_{0\leq j_1<j_2<j_3} \int_0^1\int_0^1 D_P f_{j_1}f_{j_2}f_{j_3}\right)+\dots.
\end{equation*}
For the first term using lemma \ref{short} we estimate:
\begin{equation*}
\cos^n\left(\frac{1}{\sqrt{n}}\right)\sin\left(\frac{1}{\sqrt{n}}\right)\left(\sum_{j=0}^n\int_0^1\int_0^1 D_P f_j \right)\geq
\end{equation*}
\begin{equation*}
\geq \cos^n\left(\frac{1}{\sqrt{n}}\right)\cdot \left(\frac{1}{\sqrt{n}}+o(1)\right)(n+1) (2^n-N)\frac{N2^{-2n}}{16}\geq \left(\frac{1}{\sqrt{e}}+o(1)\right)\sqrt{n}(2^n-N)\frac{N2^{-2n}}{16}.
\end{equation*}
Let us point out that the appearance of:
\begin{equation*}
\frac{1}{\sqrt{e}}
\end{equation*}
here is not surprising as it corresponds to $LIN_n(\sin)$ (see section \ref{solution}).
\\
For the second term we apply lemma \ref{long} to get:
\begin{equation*}
3! \cos^{n-2}\left(\frac{1}{\sqrt{n}}\right)\sin^{3}\left(\frac{1}{\sqrt{n}}\right)\sum_{0\leq j_1<j_2<j_3}\left| \int_0^1\int_0^1 D_P f_{j_1}f_{j_2}f_{j_3}\right|\leq
\end{equation*}
\begin{equation*}
\leq 3! \frac{1}{(\sqrt{n})^3}\frac{N2^{-n}}{16}\sum_{0\leq j_1<j_2<j_3\leq n}  2^{j_1-j_3}
\leq 3! \frac{1}{(\sqrt{n})^3}\frac{N2^{-n}}{16}\cdot \sum_{d=1}^n 2^{-d} d(n-d)
=O\left(\frac{1}{\sqrt{n}}\right),
\end{equation*}
and the higher order terms are estimated similarly.
\\
Hence we can write:
\begin{equation*}
\inf_{2^{n-1}<2\sharp(P)=2N\leq 2^n}||D_P||_1\geq \inf_{2^{n-1}<2\sharp(P)=2N\leq 2^n}\left| D_P H_n\right|\geq
\end{equation*}
\begin{equation*}
\geq \left(\frac{1}{\sqrt{e}}+o(1)\right) \sqrt{n}(2^n-N)\frac{N2^{-2n}}{16}+O\left(\frac{1}{\sqrt{n}}\right).
\end{equation*}
Thus since:
\begin{equation*}
2^{n-1}<2\sharp(P)=2N\leq 2^n
\end{equation*}
then
\begin{equation*}
\liminf d_N\geq \frac{3}{256\sqrt{e\ln{(2)}}}
\end{equation*}
and
\begin{equation*}
\limsup d_N\geq \frac{1}{64\sqrt{e\ln{(2)}}}
\end{equation*}
in fact:
\begin{equation*}
d_{2^{n-1}}\geq \frac{1}{\sqrt{\ln(2^{n-1})}}\frac{1}{64\sqrt{e}}\sqrt{n}\rightarrow \frac{1}{64\sqrt{e\ln(2)}}.
\end{equation*}
\end{proof}

\end{document}